\theoremstyle{plain}
\newtheorem{theorem}{Theorem}[section]
\newtheorem{corollary}[theorem]{Corollary}
\newtheorem{conjecture}[theorem]{Conjecture}
\theoremstyle{definition}
\newtheorem{remark}[theorem]{Remark}
\newcommand{\Spvek}[2][r]{%
	\gdef\@VORNE{1}
	\left(\hskip-\arraycolsep%
	\begin{array}{#1}\vekSp@lten{#2}\end{array}%
	\hskip-\arraycolsep\right)}
\def\vekSp@lten#1{\xvekSp@lten#1;vekL@stLine;}
\def\vekL@stLine{vekL@stLine}
\def\xvekSp@lten#1;{\def\temp{#1}%
	\ifx\temp\vekL@stLine
	\else
	\ifnum\@VORNE=1\gdef\@VORNE{0}
	\else\@arraycr\fi%
	#1%
	\expandafter\xvekSp@lten
	\fi}
\begin{document}
	\title[Orthogonal polynomials associated with equilibrium measures on $\mathbb{R}$]{Orthogonal polynomials associated with equilibrium measures on $\mathbb{R}$}
	\author{G\"{o}kalp Alpan}
\address{Department of Mathematics, Bilkent University, 06800 Ankara, Turkey}
\email{gokalp@fen.bilkent.edu.tr}
\thanks{The author is supported by a grant from T\"{u}bitak: 115F199.}
\subjclass[2010]{31A15 \and 42C05}
\keywords{ Widom factors \and Equilibrium measure \and orthogonal polynomials \and Jacobi matrices}
\begin{abstract}
Let $K$ be a non-polar compact subset of $\mathbb{R}$ and $\mu_K$ denote the equilibrium measure of $K$. Furthermore, let $P_n\left(\cdot; \mu_K\right)$ be the $n$-th monic orthogonal polynomial for $\mu_K$. It is shown that  $\|P_n\left(\cdot; \mu_K\right)\|_{L^2(\mu_K)}$, the Hilbert norm of $P_n\left(\cdot; \mu_K\right)$ in $L^2(\mu_K)$, is bounded below by $\mathrm{Cap}(K)^n$ for each $n\in\mathbb{N}$. A sufficient condition is given for $\displaystyle\left(\|P_n\left(\cdot;\mu_K\right)\|_{L^2(\mu_K)}/\mathrm{Cap}(K)^n\right)_{n=1}^\infty$ to be unbounded. More detailed results are presented for sets which are union of finitely many intervals.
	
\end{abstract}
\maketitle
\section{Introduction and results}
\label{intro}
Let $K$ be an infinite compact subset of $\mathbb{R}$ and let $\|\cdot\|_{L^\infty(K)}$ denote the sup-norm on $K$. The polynomial $T_{n,K}(x)=x^n+\cdots$ satisfying 
\begin{equation}\label{eq1}
\|T_{n,K}\|_{L^\infty(K)}=\min\{\|Q_n\|_{L^\infty(K)}: \mbox{$Q_n$ monic real polynomial of degree $n$}\}
\end{equation} is called the $n$-th Chebyshev polynomial on $K$. We have (see e.g. Corollary 5.5.5 in \cite{Ransford})
\begin{equation}\label{eq2}
\lim_{n\rightarrow\infty}\|T_{n,K}\|_{L^\infty(K)}^{1/n}=\mathrm{Cap}(K),
\end{equation}
where $\mathrm{Cap}(\cdot)$ denotes the logarithmic capacity. Let $M_{n,K}:= \|T_{n,K}\|_{L^\infty(K)}/\mathrm{Cap}(K)^n$. Then $M_{n,K}\geq 2$, see \cite{sif}. If $K=\cup_{i=1}^n [\alpha_i,\beta_i]$ where $-\infty<\alpha_1<\beta_1<\alpha_2<\beta_2\dots<\alpha_n<\beta_n<\infty$, then $(M_{n,K})_{n=1}^\infty$ is bounded and many results were obtained (see \cite{tot11,tot2,tot1,widom2}) regarding the limit points of this sequence. It was recently proved in \cite{csz} that there are Cantor sets for which $(M_{n,K})_{n=1}^\infty$ is bounded. On the other direction, for each sequence $(c_n)_{n=1}^\infty$ of real numbers with subexponential growth, there is a Cantor set $K(\gamma)$ such that $M_{n,K(\gamma)}\geq c_n$ for all $n\in\mathbb{N}$, see \cite{gonchat}. We refer the reader to \cite{sodin} for a general discussion on Chebyshev polynomials and \cite{Ransford,saff} for basic concepts of potential theory.

Throughout the article, by a measure we mean a unit Borel measure with an infinite compact support on $\mathbb{R}$. For such a measure $\mu$, the polynomial $P_n(x;\mu)=x^n+\cdots$ satisfying 
\begin{equation*}\label{eq3}
\|P_n\left(\cdot;\mu\right)\|_{L^2(\mu)}=\min\{\|Q_n\|_{L^2(\mu)}: \mbox{$Q_n$ monic real polynomial of degree $n$}\}
\end{equation*} is called the $n$-th monic orthogonal polynomial for $\mu$ where $\|\cdot\|_{L^2(\mu)}$ is the Hilbert norm in $L^2(\mu)$. Similarly, the polynomial $p_n(x;\mu):=P_n(x;\mu)/\|P_n(\cdot;\mu)\|_{L^2(\mu)}$ is called $n$-th orthonormal polynomial for $\mu$. If we assume that $P_{-1}(x;\mu):=0$ and $P_{0}(x;\mu):=1$ then the monic orthogonal polynomials obey a three term recurrence relation, that is 
\begin{equation}\label{rec}
P_{n+1}(x;\mu) = (x- b_{n+1})P_{n}(x;\mu) - a_n^2 \, P_{n-1}(x;\mu),\,\,\,\,\,\,\,\,n \in \mathbb{N}_0, 
\end{equation}
where $a_n>0$, $b_n\in\mathbb{R}$ and $\mathbb{N}_{0}=\mathbb{N}\cup \{0\}$. We call $(a_n)_{n=1}^\infty$ and $(b_n)_{n=1}^\infty$ as recurrence coefficients for $\mu$. We refer only the $a_n$'s in the text. It is elementary to verify that 
\begin{equation}\label{alar}
\|P_n(\cdot;\mu)\|_{L^2(\mu)}=a_1\cdots a_n 
\end{equation}
for each $n\in\mathbb{N}$. 

For a measure $\mu$, let $W_n(\mu):=\|P_n(\cdot;\mu)\|_{L^2(\mu)}/\mathrm{Cap}(\mathrm{supp(\mu)})^n$ where $\mathrm{supp}(\cdot)$ stands for the support of the measure. By \eqref{eq1}, \eqref{eq2} and using the assumption that $\mu$ is a unit measure, we have 
\begin{equation}\label{eq4}
\|P_n(\cdot;\mu)\|_{L^2(\mu)}\leq \|T_{n,\mathrm{supp}(\mu)}\|_{L^2(\mu)}\leq \|T_{n,\mathrm{supp}(\mu)}\|_{L^\infty(\mathrm{supp}(\mu))}
\end{equation}
for each $n\in\mathbb{N}$. Thus, by \eqref{eq2} it follows that $\lim_{n\rightarrow\infty}\|P_n(\cdot;\mu)\|_{L^2(\mu)}^{1/n}\leq \mathrm{Cap}(\mathrm{supp(\mu)})$. A measure $\mu$ satisfying $\lim_{n\rightarrow\infty}\|P_n(\cdot;\mu)\|_{L^2(\mu)}^{1/n}= \mathrm{Cap}(\mathrm{supp(\mu)})$ is called regular in the sense of Stahl-Totik  and we write $\mu\in\textbf{Reg}$ if $\mu$ is regular. 

For a non-polar compact subset $K$ of $\mathbb{R}$, let $\mu_K$ denote the equilibrium measure of $K$. It is due to Widom that $\mu_K\in\textbf{Reg}$, see \cite{widom} and also \cite{simon1,Stahl,ase2}. Hence,  $\lim_{n\rightarrow\infty} \left(W_n\left(\mu_K\right)\right)^{1/n}=1$ holds. But the behavior of $(W_n(\mu_K))_{n=1}^\infty$ is unknown for many cases and the main aim of this paper is to study the upper and lower bounds of this sequence for general compact sets on $\mathbb{R}$. We remark that by Lemma 1.2.7 in \cite{Stahl} we have $\mathrm{Cap(supp}(\mu_K))= \mathrm{Cap}(K)$, and we use these expressions interchangeably.

A non-polar compact set $K$ on $\mathbb{R}$ which is regular with respect to the Dirichlet problem is called a Parreau-Widom set if $\mathrm{PW}(K):=\sum_j g_K(c_j)$ is finite where $g_K$ denotes the Green function with a pole at infinity for $\overline{\mathbb{C}}\setminus K$ and $\{c_j\}_j$ is the set of critical points of $g_K$. If $K=\cup_{j=1}^n [\alpha_j,\beta_j]$ where $-\infty<\alpha_1<\beta_1<\alpha_2<\beta_2\dots<\alpha_n<\beta_n<\infty$ then $K$ is a Parreau-Widom set and each gap $(\beta_j, \alpha_{j+1})$ contains exactly one critical point $c_j$ and there are no other critical points of $g_K$. Some Cantor sets are Parreau-Widom, see e.g. \cite{alpgon2,peher}. But a Parreau-Widom set is necessarily of positive Lebesgue measure. We refer the reader to \cite{christiansen,yuditskii} for a discussion on Parreau-Widom sets.

Let $K$ be a Parreau-Widom set and $\mu$ be a measure with $\mathrm{supp}(\mu)=K$ which is absolutely continuous with respect to Lebesgue measure, that is $d\mu(t)=\mu^{\prime}(t)\,dt$ on $K$ where $\mu^{\prime}$ is the Radon-Nikodym derivative of $\mu$ with respect to the Lebesgue measure restricted to $K$. Recall that $\mu$ satisfies the Szeg\H{o} condition on $K$ if $\int \log{\mu^{\prime}(t)}\, d\mu_K(t)>-\infty$. In this case we write $\mu\in\mathrm{Sz}(K)$. It is known that $\mu_K\in\mathrm{Sz}(K)$, see Proposition 2 and (4.1) in \cite{christiansen}. By \cite{christiansen}, this implies that there is an $M>0$ such that $1/M<W_n(\mu_K)<M$ holds for all $n\in\mathbb{N}$. In the inverse direction, one can find a Cantor set $K(\gamma)$ such that $W_n\left(\mu_{K\left(\gamma\right)}\right)\rightarrow\infty$ as $n\rightarrow\infty$, see \cite{alpgon}.

First, we restrict our attention to union of several intervals. Let $T_N$ be a real polynomial of degree $N$ with $N\geq 2$ such that it has $N$ real and simple zeros $x_1<\dots <x_n$ and $N-1$ critical points $y_1<\dots<y_{n-1}$ with $|T_N(y_i)|\geq 1$ for each $i\in\{1,\ldots,N-1\}$. We call such a polynomial admissible. If $K=T_N^{-1}([-1,1])$ for an admissible polynomial $T_N$  then $K$ is called a $T$-set. A $T$-set is of the form $\cup_{i=1}^n [\alpha_i,\beta_i]$ with $n\leq N$ where $N$ is the degree of the associated admissible polynomial. For applications of $T$-sets to polynomial inequalities and spectral theory of orthogonal polynomials, we refer the reader to \cite{peherr,totikinv} and Chapter 5 in \cite{Sim3}.  We have the following characterization for $T$-sets, see Lemma 2.2 in \cite{totikacta}:

\begin{theorem}\label{thm1}
	Let $K=\cup_{j=1}^n [\alpha_j,\beta_j]$ be a disjoint union of $n$ intervals. Then $K$ is a $T$-set if and only if $\mu_K([\alpha_j,\beta_j])\in\mathbb{Q}$. If $K=T_N^{-1}{[-1,1]}$ for some admissible polynomial $T_N$ then for each $j\in\{1,\ldots,n\}$ there is an $l\in\mathbb{N}$ such that $\mu_K([\alpha_j,\beta_j])=l/N.$
\end{theorem}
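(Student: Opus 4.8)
The plan is to connect the rationality of the harmonic measures \(\omega_j:=\mu_K([\alpha_j,\beta_j])\) to the global single-valuedness of a suitable power of the complex Green function. Write \(g_K\) for the Green function of \(\Omega:=\overline{\mathbb C}\setminus K\) with pole at infinity and let \(\tilde g_K\) denote a locally defined harmonic conjugate, so that \(\Phi:=\exp(g_K+i\tilde g_K)\) is a locally analytic, non-vanishing function on \(\Omega\) with \(\Phi(z)\sim z/\mathrm{Cap}(K)\) at infinity and \(|\Phi|=1\) on \(\partial\Omega=K\). Since \(\Omega\) is not simply connected, \(\Phi\) is multivalued: transporting it once counterclockwise around the band \([\alpha_j,\beta_j]\) multiplies it by \(\exp(2\pi i\,\omega_j)\), because the increment of \(\tilde g_K\) around that loop equals \(2\pi\) times the equilibrium mass it encloses. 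This monodromy computation is the common engine for both implications; note that the product of all factors is \(\exp(2\pi i\sum_j\omega_j)=1\), consistent with single-valuedness on a large circle.

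For the forward implication, suppose \(K=T_N^{-1}([-1,1])\) for an admissible \(T_N\). By the classical behaviour of equilibrium measures under polynomial preimages (see \cite{saff}), \(g_K=\tfrac1N\,g_{[-1,1]}\circ T_N\) and \(\mu_K=\tfrac1N\,T_N^{*}\mu_{[-1,1]}\), where \(\mu_{[-1,1]}\) is the arcsine measure. The admissibility hypotheses (simple real zeros and \(|T_N(y_i)|\ge1\) at the critical points) guarantee that \(T_N\) is strictly monotone on each of its \(N\) laps and that each lap meets \(\{|T_N|\le1\}\) in a closed subinterval mapped bijectively onto \([-1,1]\); thus \(K\) is the union of \(N\) such subintervals \(I_1,\dots,I_N\). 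Because \(T_N\) is one-to-one on each \(I_k\), the pullback identity gives \(\mu_K(I_k)=\tfrac1N\,\mu_{[-1,1]}([-1,1])=1/N\). Each band \([\alpha_j,\beta_j]\) is a union of consecutive \(I_k\)'s (those linked at critical points with \(|T_N(y_i)|=1\)), so \(\mu_K([\alpha_j,\beta_j])=l_j/N\) for some \(l_j\in\mathbb N\). This proves rationality and, simultaneously, the last assertion of the theorem.

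For the converse, assume \(\omega_j=l_j/N\) with a common denominator \(N\). Then the monodromy factor around each band becomes \(\exp(2\pi i\,l_j)=1\), so \(\Phi^N\) is single-valued and analytic on \(\Omega\), with a pole of order \(N\) at infinity and \(|\Phi^N|>1\) on \(\Omega\). Set \(T_N:=\tfrac12(\Phi^N+\Phi^{-N})\). On \(K\) we have \(|\Phi|=1\), hence \(T_N=\cos(N\tilde g_K)\) takes real values in \([-1,1]\); across each gap \(\Phi\) continues real-analytically, and a Schwarz-reflection argument shows that \(T_N\) extends analytically across \(K\). Being analytic on all of \(\overline{\mathbb C}\) except for a pole of order \(N\) at infinity, \(T_N\) is a polynomial of degree \(N\) with leading coefficient \(\tfrac12\mathrm{Cap}(K)^{-N}\). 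Tracking the argument of \(\Phi^N\) along \(\mathbb R\) shows that \(T_N\) equioscillates, yielding simple real zeros and critical values of modulus at least \(1\); hence \(T_N\) is admissible, \(K\subseteq T_N^{-1}([-1,1])\), and a count of preimages forces equality, so \(K\) is a \(T\)-set.

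The principal obstacle is the converse, and within it the rigorous verification that \(T_N\) is genuinely a polynomial and is admissible. Single-valuedness of \(\Phi^N\) follows cleanly from the period computation, but one must argue carefully that \(T_N=\tfrac12(\Phi^N+\Phi^{-N})\) has no singularities on \(K\) (the reflection across the bands and the matching of boundary values in the gaps), that the resulting polynomial has exactly \(N\) real simple zeros with all \(N-1\) critical values outside \((-1,1)\), and finally that \(T_N^{-1}([-1,1])\) coincides with \(K\) rather than a strictly larger set. Controlling the degree and the zero/critical-point count simultaneously is the delicate point.
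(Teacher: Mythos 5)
The paper itself contains no proof of this statement: Theorem \ref{thm1} is quoted directly as Lemma 2.2 of Totik's paper \cite{totikacta}, so there is no internal argument to compare yours against. Judged on its own merits, your proposal follows the standard route in that circle of literature, and its skeleton is sound: the period of $\tilde g_K$ around $E_j=[\alpha_j,\beta_j]$ is indeed $2\pi\mu_K(E_j)$; the pullback identities $g_K=\frac{1}{N}\,g_{[-1,1]}\circ T_N$ and $\mu_K=\frac{1}{N}\,T_N^{*}\mu_{[-1,1]}$, together with the fact that admissibility forces each lap of $T_N$ to map a closed subinterval homeomorphically onto $[-1,1]$, give the forward implication and the value $l_j/N$ simultaneously; and $T_N=\frac{1}{2}\left(\Phi^N+\Phi^{-N}\right)$ is exactly the classical construction for the converse.

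Two remarks on the points you flag as delicate, both of which can be closed more cleanly than you suggest. First, the equality $T_N^{-1}([-1,1])=K$ requires no count of preimages: the Joukowski map $w\mapsto\frac{1}{2}\left(w+w^{-1}\right)$ sends $\{|w|>1\}$ into $\overline{\mathbb{C}}\setminus[-1,1]$, and $|\Phi^N|=e^{Ng_K}>1$ off $K$, so $T_N(z)\notin[-1,1]$ for every $z\notin K$; combined with $T_N=\cos(N\theta)\in[-1,1]$ on $K$, this gives both inclusions at once. Second, admissibility follows by making your equioscillation claim precise: the boundary value of $\tilde g_K$ from above is $\theta(t)=\pi\mu_K((t,\infty))$, which is strictly decreasing on the interior of each band (the density of $\mu_K$ is positive there), constant on each gap, and satisfies $N\theta\in\pi\mathbb{Z}$ at every band endpoint because the band masses are $l_j/N$. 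Hence on $E_j$ the function $T_N=\cos(N\theta)$ has exactly $l_j$ simple zeros and $l_j-1$ interior critical points with $|T_N|=1$, while each of the $n-1$ gaps carries at least one critical point with $|T_N|>1$ (on a gap, $T_N$ is real, of constant sign, of modulus greater than $1$ in the interior, and equals $\pm 1$ at the two ends, so it has an interior extremum). This accounts for $\sum_j l_j=N$ zeros and $(N-n)+(n-1)=N-1$ critical points, which by degree count are all of them; so $T_N$ is admissible. With these two points filled in, your proof is complete and agrees with the proof in the cited source.
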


If $K=T_N^{-1}{[-1,1]}$ for an admissible polynomial $T_N$ then (see Theorem 9 and Lemma 3 in \cite{van assche}) since $\mu_K\in\mathrm{Sz}(K)$, there is a sequence $(a_n^\prime)_{n=1}^\infty$ with $a_k^\prime=a_{k+N}^\prime$ for each $k\in\mathbb{N}$ such that $a_n-a_n^\prime\rightarrow 0$ as $n\rightarrow \infty$ where $(a_n)_{n=1}^\infty$ is the sequence of recurrence coefficients in \eqref{rec} for $\mu_K$. In this case we call $(a_n^\prime)_{n=1}^\infty$ the periodic limit for $(a_n)_{n=1}^\infty$ and $(a_n)_{n=1}^\infty$ asymptotically periodic. Our first theorem is about $\left(W_n\left(\mu_K\right)\right)_{n=1}^\infty$ when $K$ is a $T$-set. 

\begin{theorem}\label{thm2}
	Let $K=T_N^{-1}{[-1,1]}$ where $T_N$ is an admissible polynomial with leading coefficient $c$. Furthermore, let $(a_n)_{n=1}^\infty$ be the sequence of recurence coefficients for $\mu_K$ and $(a_n^\prime)_{n=1}^\infty$ be the periodic limit of it. Then
	\begin{enumerate}[(a)]
		\item $\displaystyle\liminf_{n\rightarrow\infty} W_n\left(\mu_K\right)=\sqrt{2}$.
		\item $W_n\left(\mu_K\right)\geq 1$ for each $n\in\mathbb{N}$.
		\item $\displaystyle \inf_l \frac{a_1^\prime\cdots a_l^\prime}{\mathrm{Cap}(K)^l}= \frac{a_1^\prime\cdots a_N^\prime}{\mathrm{Cap}(K)^N} =1.$
	\end{enumerate}
\end{theorem}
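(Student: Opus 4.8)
The plan is to reduce all three parts to the interplay between the pullback structure of $\mu_K$ under $T_N$ and the asymptotic periodicity of its recurrence coefficients, isolating the one genuinely hard inequality for the end. I would first dispose of (b), which is the bound $\|P_n(\cdot;\mu_K)\|_{L^2(\mu_K)}\ge\mathrm{Cap}(K)^n$ announced in the abstract, by a Jensen/Green's-function argument valid for any equilibrium measure. Writing $P_n(\cdot;\mu_K)=\prod_{j=1}^n(x-z_j)$ and using concavity of $\log$ together with the fact that $\mu_K$ is a probability measure, one gets $\log\|P_n\|_{L^2(\mu_K)}^2\ge\int\log|P_n|^2\,d\mu_K=2\sum_j\int\log|x-z_j|\,d\mu_K(x)$. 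Since $\int\log|x-z|\,d\mu_K(x)=g_K(z)+\log\mathrm{Cap}(K)$ and $g_K\ge 0$ everywhere, the right-hand side is at least $2n\log\mathrm{Cap}(K)$, which is precisely $W_n(\mu_K)\ge 1$.

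For (a) and (c) I would exploit the polynomial mapping. Because $\mu_K$ is the equilibrium measure of $T_N^{-1}([-1,1])$, one has $(T_N)_*\mu_K=\mu_{[-1,1]}$, and the degree-$Nk$ orthonormal polynomial of $\mu_K$ is $p_{Nk}(\cdot;\mu_K)=\tilde p_k\circ T_N$, where $\tilde p_k$ is the degree-$k$ orthonormal polynomial of the arcsine measure $\mu_{[-1,1]}$. Comparing leading coefficients, using the capacity formula $\mathrm{Cap}(K)^N=\mathrm{Cap}([-1,1])/|c|=1/(2|c|)$ and the known leading coefficient $\sqrt2\,2^{k-1}$ of $\tilde p_k$, gives $W_{Nk}(\mu_K)=\sqrt2$ exactly for every $k\ge 1$; in particular $\liminf_n W_n(\mu_K)\le\sqrt2$. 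Next, writing $n=Nk+r$ with $0\le r<N$ and factoring $W_{Nk+r}=W_{Nk}\cdot a_{Nk+1}\cdots a_{Nk+r}/\mathrm{Cap}(K)^r$, the asymptotic periodicity $a_{Nk+j}\to a_j'$ yields $\lim_{k\to\infty}W_{Nk+r}=\sqrt2\,Q_r$, where $Q_r:=a_1'\cdots a_r'/\mathrm{Cap}(K)^r$ and $Q_0=1$. Hence the limit points of $(W_n(\mu_K))$ are exactly $\{\sqrt2\,Q_r:0\le r<N\}$ and $\liminf_n W_n(\mu_K)=\sqrt2\,\min_r Q_r$.

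This reduces everything to (c). For the displayed equalities there, I would argue as follows. Since $\mu_K\in\textbf{Reg}$ we have $(a_1\cdots a_{Nk})^{1/(Nk)}\to\mathrm{Cap}(K)$; combined with $a_n\to a_n'$ (the coefficients being bounded away from $0$ and $\infty$) and a geometric-mean argument, this forces $(a_1'\cdots a_N')^{1/N}=\mathrm{Cap}(K)$, i.e. $Q_N=1$. The full-period product identity $a_{l+1}'\cdots a_{l+N}'=a_1'\cdots a_N'$ then gives $Q_{l+N}=Q_l\,Q_N=Q_l$, so $(Q_l)$ is $N$-periodic and $\inf_l Q_l=\min_{1\le l\le N}Q_l$. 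It remains to prove $Q_l\ge 1$ for all $l$, equivalently $a_1'\cdots a_l'\ge\mathrm{Cap}(K)^l$; granting this, $\inf_l Q_l=Q_N=1$, and substituting $\min_r Q_r=1$ into the previous paragraph gives $\liminf_n W_n(\mu_K)=\sqrt2$, completing (a).

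The crux, and the step I expect to be the main obstacle, is the partial-product inequality $Q_l\ge 1$. It cannot come from the Jensen/Green bound of (b): that bound is tied to the \emph{equilibrium} measure, whereas $(a_n')$ are the parameters of a genuinely different period-$N$ measure $\mu'$ on the same bands. Already for a single interval one checks that $\mu'$ is the second-kind Chebyshev measure $\tfrac2\pi\sqrt{1-x^2}\,dx$, for which the potential estimate $U^{\mu'}\le\log(1/\mathrm{Cap})$ fails outright, even though $Q_l=W_l(\mu')=1$ holds; so the inequality is a statement about the particular phase on the isospectral torus selected by $\mu_K$. The route I would pursue is to write $\log Q_l=\sum_{j=1}^l\bigl(\log a_j'-\log\mathrm{Cap}(K)\bigr)$ as the partial sums of a mean-zero $N$-periodic sequence and prove these partial sums are nonnegative. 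To do this I would identify the periodic limit explicitly as the $T_N$-sieving of the constant Jacobi matrix $a_n\equiv\tfrac12$, $b_n\equiv 0$ (the bulk of $\mu_{[-1,1]}$) and evaluate the partial products from the resulting Floquet/discriminant data, or else invoke the step-by-step Szeg\H{o} sum rules for periodic finite-gap Jacobi matrices furnished by the Parreau-Widom theory cited above. Pinning down the correct sign of these boundary terms is where the real work lies.
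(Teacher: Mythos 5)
Your proof of part (b) is correct and complete, and it takes a genuinely different, more elementary route than the paper: the chain $\log\|P_n(\cdot;\mu_K)\|_{L^2(\mu_K)}^2\geq\int\log|P_n|^2\,d\mu_K=2\sum_{j=1}^n\int\log|x-z_j|\,d\mu_K(x)\geq 2n\log\mathrm{Cap}(K)$ uses only Jensen's inequality and Frostman's bound $\int\log|x-z|\,d\mu_K(x)=g_K(z)+\log\mathrm{Cap}(K)\geq\log\mathrm{Cap}(K)$ for all $z\in\mathbb{C}$, and nowhere uses that $K$ is a $T$-set. In fact it proves Theorem \ref{thm3} in full generality in a few lines, bypassing both the Widom machinery of Theorem \ref{thm4} and the approximation of $K$ by $T$-sets that the paper needs. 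Your reduction of (a) and (c) is also sound: $W_{lN}(\mu_K)=\sqrt2$ exactly (the same leading-coefficient computation as in the paper), asymptotic periodicity shows the limit points of $\left(W_n(\mu_K)\right)_{n=1}^\infty$ are exactly $\sqrt2\,Q_r$ with $Q_r=a_1^\prime\cdots a_r^\prime/\mathrm{Cap}(K)^r$, and $Q_N=1$ follows either from regularity or simply from $W_{N(k+1)}(\mu_K)/W_{Nk}(\mu_K)=1$ for all $k$.

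However, there is a genuine gap, and you have located it yourself: the inequality $Q_l\geq 1$ for $0<l<N$ is never proved; you only list candidate routes (Floquet/discriminant data, step-by-step sum rules) and concede that determining the sign of the relevant boundary terms is ``where the real work lies.'' This is not a technical loose end but the entire analytic content of what remains: by your own reduction, $Q_l\geq1$ is \emph{equivalent} to $\liminf_n W_n(\mu_K)\geq\sqrt2$, i.e.\ to the deep half of part (a), and part (c) collapses without it; your part (b) cannot substitute, since it only yields $Q_l\geq 1/\sqrt2$. In the paper this is precisely where Widom's Szeg\H{o}-type asymptotics enter: Theorem \ref{thm4}(c) (Widom's Theorem 9.2) bounds the limit points of $\left(W_n(\mu_K)\right)^2$ below by $2\pi R(\infty)\mathrm{Cap}(K)\exp(-\mathrm{PW}(K))$, and the computation of $R(\infty)$ from the explicit density \eqref{eqm} together with the Green's function identity \eqref{green} shows this constant equals $2$; parts (b) and (c) of the theorem are then derived from (a) via the periodicity of $\nu(\mu_K^\prime,\Gamma_n)$ and the identification of the periodic limit in Corollary 6.7 of \cite{Chris}. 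Until you either import such a result or actually carry out one of your sketched alternatives, parts (a) and (c) remain unproven.
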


An arbitrary compact set $K$ on $\mathbb{R}$ can be approximated in an appropriate way by $T$-sets, see Section 5.8 in \cite{Sim3} and Section 2.4 in \cite{tottot}. We rely upon these techniques in order to prove our main result:

\begin{theorem}\label{thm3}
	Let $K$ be a non-polar compact subset of $\mathbb{R}$. Then $W_n(\mu_K)\geq 1$ for all $n\in\mathbb{N}$.
\end{theorem}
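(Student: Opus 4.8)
The plan is to deduce the general case from the $T$-set case already settled in Theorem \ref{thm2}(b), by approximating $K$ with polynomial inverse images in the manner referenced above. Fix $n\in\mathbb{N}$; since $\mathrm{Cap}(\mathrm{supp}(\mu_K))=\mathrm{Cap}(K)$, it suffices to prove $\|P_n(\cdot;\mu_K)\|_{L^2(\mu_K)}\geq \mathrm{Cap}(K)^n$. The strategy is to build a sequence of $T$-sets $(K_m)_{m=1}^\infty$ with $\mathrm{Cap}(K_m)\to\mathrm{Cap}(K)$ and $\mu_{K_m}\to\mu_K$ in the weak-$*$ topology, apply Theorem \ref{thm2}(b) to each $K_m$, and let $m\to\infty$.

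First I would produce the approximating $T$-sets in two stages. Stage one: approximate $K$ from the outside by a decreasing sequence of finite unions of closed intervals $L_m$ with $\bigcap_m L_m=K$. Continuity of capacity from above for compact sets gives $\mathrm{Cap}(L_m)\to\mathrm{Cap}(K)$, and hence convergence of the corresponding equilibrium energies. The measures $\mu_{L_m}$ are all supported in the fixed compact $L_1$, so by weak-$*$ compactness any subsequential limit $\nu$ is a probability measure supported in $\bigcap_m L_m=K$; lower semicontinuity of the logarithmic energy then gives $I(\nu)\leq\lim_m I(\mu_{L_m})=I(\mu_K)$, and uniqueness of the equilibrium measure forces $\nu=\mu_K$, so in fact $\mu_{L_m}\to\mu_K$. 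Stage two: each $L_m=\bigcup_j[\alpha_j,\beta_j]$ is perturbed into a $T$-set. By Theorem \ref{thm1}, $L_m$ is a $T$-set precisely when every component mass $\mu_{L_m}([\alpha_j,\beta_j])$ is rational; as these masses depend continuously on the endpoints, an arbitrarily small movement of the $\alpha_j,\beta_j$ makes all of them rational simultaneously, yielding a $T$-set as close to $L_m$ as desired in capacity and in the weak-$*$ sense. A diagonal choice then gives the required sequence $(K_m)$.

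With the approximation in hand the limiting argument is routine. For fixed $n$ the squared norm is expressed through Hankel determinants of the moments,
\begin{equation*}
\|P_n(\cdot;\mu)\|_{L^2(\mu)}^2=\frac{D_n(\mu)}{D_{n-1}(\mu)},\qquad D_k(\mu)=\det\Bigl(\textstyle\int x^{i+j}\,d\mu\Bigr)_{i,j=0}^{k},
\end{equation*}
which is a continuous function of the moments $\int x^i\,d\mu$, $0\leq i\leq 2n$, on the region where $D_{n-1}>0$. Since $K$ is non-polar, $\mu_K$ has infinite support and $D_{n-1}(\mu_K)>0$; the weak-$*$ convergence $\mu_{K_m}\to\mu_K$ with uniformly bounded supports gives convergence of all these moments, whence $\|P_n(\cdot;\mu_{K_m})\|_{L^2(\mu_{K_m})}\to\|P_n(\cdot;\mu_K)\|_{L^2(\mu_K)}$. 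Applying Theorem \ref{thm2}(b) to each $T$-set yields $\|P_n(\cdot;\mu_{K_m})\|_{L^2(\mu_{K_m})}\geq \mathrm{Cap}(K_m)^n$, and letting $m\to\infty$ gives $\|P_n(\cdot;\mu_K)\|_{L^2(\mu_K)}\geq\mathrm{Cap}(K)^n$, i.e. $W_n(\mu_K)\geq 1$.

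The main obstacle is the approximation step, not the passage to the limit. Producing $T$-sets $K_m$ that \emph{simultaneously} control $\mathrm{Cap}(K_m)$ and force $\mu_{K_m}\to\mu_K$ weakly is exactly the delicate point handled by the polynomial inverse image constructions referenced in Section 5.8 of \cite{Sim3} and Section 2.4 of \cite{tottot}: the two stages above (interval approximation, then rationalization of the component masses via Theorem \ref{thm1}) must be arranged so that the errors in capacity and in the moments are driven to zero together, which is where the quantitative estimates of those references are needed.
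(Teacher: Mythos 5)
Your proposal is correct and follows essentially the same route as the paper: approximate $K$ from outside by a decreasing sequence of $T$-sets (the paper simply invokes Theorem 5.8.4 of \cite{Sim3} for this construction), apply Theorem \ref{thm2}(b) to each approximant, and pass to the limit using weak-$*$ convergence of the equilibrium measures together with convergence of the capacities. The only real difference is in the limit step: where you establish continuity of $\|P_n(\cdot;\mu)\|_{L^2(\mu)}$ in the moments via Hankel determinants, the paper gets by with a one-sided estimate, bounding $\int P_n^2(t;\mu_{F_s})\,d\mu_{F_s}$ by $\int P_n^2(t;\mu_K)\,d\mu_{F_s}$ through the extremal property of $P_n(\cdot;\mu_{F_s})$ and then using monotonicity of capacity, so only the fixed continuous function $P_n^2(\cdot;\mu_K)$ needs to be integrated against the converging measures.
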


\begin{remark} Theorem \ref{thm3} can be seen as an analogue of Schiefermayr's Theorem (Theorem 2 in\cite{sif}). We do not know if $1$ on the right side of the inequality in  Theorem \ref{thm3} can be improved. This constant can be at most $\sqrt{2}$ by the part $(a)$ of Theorem \ref{thm2}. It suffices to find a bigger lower bound for $W_n\left(\mu_K\right)$ in the part $(b)$ of Theorem \ref{thm2} to improve the result.
\end{remark}

Note that a weaker version of the above theorem was conjectured in \cite{alpgon}. Regularity of $\mu_K$ in the sense of Stahl-Totik follows as a corollary of Theorem \ref{thm3} since the inequality $\lim_{n\rightarrow\infty}\left(W_n\left(\mu_K\right)\right)^{1/n}\geq 1$ directly follows. On the other hand, regularity of a measure $\mu$ in the sense of Stahl-Totik does not even imply that $\limsup_{n\rightarrow\infty} W_n(\mu) >0$, see e.g. Example 1.4 in \cite{simon1}. Hence, the implications of Theorem 3 are profoundly different than those of $\mu_K\in\mathbf{Reg}$. The following result which gives a criterion for unboundedness of $\left(W_n\left(\mu_K\right)\right)_{n=1}^\infty$ is also an immediate corollary of Theorem \ref{thm3}:

\begin{corollary}\label{cor}
	Let $K$ be a non-polar compact subset of $\mathbb{R}$ and $(a_n)_{n=1}^\infty$ be the sequence of recurrence coefficients for $\mu_K$. If $\liminf_{n\rightarrow\infty} a_n =0$ then $\left(W_n\left(\mu_K\right)\right)_{n=1}^\infty$ and $\left(M_{n,K}\right)_{n=1}^\infty$ are unbounded.
\end{corollary}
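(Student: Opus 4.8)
The plan is to exploit the product formula \eqref{alar}, which gives
$W_n(\mu_K)=(a_1\cdots a_n)/\mathrm{Cap}(K)^n$, together with the lower bound furnished by Theorem \ref{thm3}. Comparing consecutive terms yields the identity $W_n(\mu_K)=W_{n-1}(\mu_K)\cdot a_n/\mathrm{Cap}(K)$ for each $n\in\mathbb{N}$, with the convention $W_0(\mu_K)=1$ (valid since $P_0\equiv 1$ and $\mu_K$ is a unit measure). Rearranging, $W_{n-1}(\mu_K)=W_n(\mu_K)\cdot \mathrm{Cap}(K)/a_n$. Since $K$ is non-polar we have $\mathrm{Cap}(K)>0$, and Theorem \ref{thm3} gives $W_n(\mu_K)\geq 1$; hence $W_{n-1}(\mu_K)\geq \mathrm{Cap}(K)/a_n$ for every $n$.

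With this inequality in hand I would invoke the hypothesis $\liminf_{n\rightarrow\infty} a_n=0$: there is a subsequence $(a_{n_j})_j$ with $a_{n_j}\rightarrow 0$, and along it
$W_{n_j-1}(\mu_K)\geq \mathrm{Cap}(K)/a_{n_j}\rightarrow\infty$, so $\left(W_n(\mu_K)\right)_{n=1}^\infty$ is unbounded. To transfer this to $\left(M_{n,K}\right)_{n=1}^\infty$ I would establish the pointwise comparison $W_n(\mu_K)\leq M_{n,K}$. Starting from \eqref{eq4} applied to $\mu=\mu_K$, one has $\|P_n(\cdot;\mu_K)\|_{L^2(\mu_K)}\leq \|T_{n,\mathrm{supp}(\mu_K)}\|_{L^\infty(\mathrm{supp}(\mu_K))}$. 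Because $\mathrm{supp}(\mu_K)\subseteq K$ and $T_{n,\mathrm{supp}(\mu_K)}$ minimizes the sup-norm over $\mathrm{supp}(\mu_K)$ among monic degree-$n$ polynomials, we get $\|T_{n,\mathrm{supp}(\mu_K)}\|_{L^\infty(\mathrm{supp}(\mu_K))}\leq \|T_{n,K}\|_{L^\infty(\mathrm{supp}(\mu_K))}\leq \|T_{n,K}\|_{L^\infty(K)}$. Dividing by $\mathrm{Cap}(K)^n$ and recalling $\mathrm{Cap}(\mathrm{supp}(\mu_K))=\mathrm{Cap}(K)$ gives $W_n(\mu_K)\leq M_{n,K}$, whence $\left(M_{n,K}\right)_{n=1}^\infty$ is unbounded as well.

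I do not expect a serious obstacle: the argument reduces to a single manipulation once Theorem \ref{thm3} is available, which is exactly why this is recorded as an immediate corollary. The only point requiring a little care is the comparison $W_n(\mu_K)\leq M_{n,K}$, where one must pass correctly between the Chebyshev sup-norms on $\mathrm{supp}(\mu_K)$ and on $K$; this is handled purely by the extremal property of $T_{n,\mathrm{supp}(\mu_K)}$ and the capacity identity $\mathrm{Cap}(\mathrm{supp}(\mu_K))=\mathrm{Cap}(K)$ noted in the introduction.
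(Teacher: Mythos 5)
Your proposal is correct and follows essentially the same route as the paper: the ratio identity $W_{n_j-1}(\mu_K)=W_{n_j}(\mu_K)\,\mathrm{Cap}(K)/a_{n_j}$ combined with the lower bound $W_{n_j}(\mu_K)\geq 1$ from Theorem \ref{thm3}, followed by the comparison $W_n(\mu_K)\leq M_{n,K}$ via \eqref{eq4} and the inclusion $\mathrm{supp}(\mu_K)\subset K$. No gaps; your added remarks on $\mathrm{Cap}(K)>0$ and the extremal property of $T_{n,\mathrm{supp}(\mu_K)}$ only make explicit what the paper leaves implicit.
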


Corollary \ref{cor} cannot be applied to sets having positive measure since in this case we have $\liminf_{n\rightarrow\infty} a_n >0$, see Remark 4.8 in \cite{alpgon}. There are some sets for which the assumptions in Corollary \ref{cor} hold, see e.g. \cite{alpgon,Barnsley3,Barnsley4}. Apart from these particular examples, there is no criterion on an arbitrary set $K$ on $\mathbb{R}$ (except having positive Lebesgue measure) determining if $\liminf_{n\rightarrow\infty} a_n =0$ for $\mu_K$. It would be interesting to calculate $\liminf_{n\rightarrow\infty} a_n$ for $\mu_{K_0}$ where $K_0$ is the Cantor ternary set. 

To our knowledge, in all known cases when $\left(W_n\left(\mu_K\right)\right)_{n=1}^\infty$ is bounded, $\left(M_{n,K}\right)_{n=1}^\infty$ is also bounded. Thus, it is plausible to make the following conjecture (see also Conjecture 4.2 in \cite{alp3}):
\begin{conjecture}
	Let $K$ be a non-polar compact subset of $\mathbb{R}$. Then $\left(W_n\left(\mu_K\right)\right)_{n=1}^\infty$ is bounded if and only if $\left(M_{n,K}\right)_{n=1}^\infty$ is bounded. 
\end{conjecture}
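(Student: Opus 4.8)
The statement is an equivalence, so I would treat the two implications separately; one of them is immediate and unconditional. For every $n$, minimality of $P_n(\cdot;\mu_K)$ among monic polynomials of degree $n$ in $L^2(\mu_K)$, together with the fact that $\mu_K$ is a unit measure with $\mathrm{supp}(\mu_K)\subseteq K$, gives the chain
\[
\|P_n(\cdot;\mu_K)\|_{L^2(\mu_K)}\le \|T_{n,K}\|_{L^2(\mu_K)}\le \|T_{n,K}\|_{L^\infty(K)},
\]
exactly as in \eqref{eq4}. Dividing by $\mathrm{Cap}(K)^n$ yields $W_n(\mu_K)\le M_{n,K}$ for all $n$. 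Hence if $(M_{n,K})_{n=1}^\infty$ is bounded then so is $(W_n(\mu_K))_{n=1}^\infty$; equivalently, unboundedness of $(W_n(\mu_K))$ forces unboundedness of $(M_{n,K})$.

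The whole content of the conjecture therefore lies in the reverse implication: \emph{boundedness of $(W_n(\mu_K))$ implies boundedness of $(M_{n,K})$}, or contrapositively, $M_{n,K}\to\infty$ along a subsequence forces $(W_n(\mu_K))$ to be unbounded. Since $W_n\le M_{n,K}$ and, by Theorem \ref{thm3}, $W_n\ge 1$, what is missing is a \emph{lower} bound for $W_n(\mu_K)$ that grows together with the Chebyshev factor. This is delicate because the $L^2(\mu_K)$-norm is insensitive to concentration: a monic polynomial can have large sup-norm on $K$ while its equilibrium $L^2$-norm stays comparable to $\mathrm{Cap}(K)^n$, so one must use the rigidity of $\mu_K$, whose density is tied to the normal derivative of $g_K$, to rule this out.

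I would first reduce to the genuinely hard case. For a Parreau--Widom set $K$ one has $\mu_K\in\mathrm{Sz}(K)$, so $(W_n(\mu_K))$ is bounded as recalled in the introduction, and a Totik--Widom estimate of the form $M_{n,K}\le 2\exp(\mathrm{PW}(K))$ shows $(M_{n,K})$ is bounded as well; thus the equivalence holds trivially, both sequences being bounded, whenever $\mathrm{PW}(K)<\infty$. The substance of the conjecture is the opposite regime $\sum_j g_K(c_j)=\infty$, where both factors may diverge, and the plan is to exhibit a single geometric quantity governing both blow-ups. Approximate $K$ from outside by $T$-sets $K_m$ with $\mathrm{Cap}(K_m)\to\mathrm{Cap}(K)$ and recurrence coefficients converging to those of $\mu_K$, as in the proof of Theorem \ref{thm3}; on each $K_m$ both factors are bounded and, by Theorem \ref{thm2} and the finite-gap theory, explicitly comparable. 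For each degree $n$ let $\Sigma_n$ denote the partial sum of $g_K(c_j)$ over the gaps of $K$ already resolved at the scale matching $n$. The goal is to show that, up to bounded multiplicative errors, both $\log M_{n,K}$ and $\log W_n(\mu_K)$ are comparable to $\Sigma_n$, so that $\sup_n\Sigma_n=\infty$, which happens exactly when $\mathrm{PW}(K)=\infty$, becomes equivalent to unboundedness of each of the two sequences.

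The main obstacle is the lower bound $\log W_n(\mu_K)\gtrsim \Sigma_n$, a quantitative refinement of Theorem \ref{thm3} sensitive to the gap structure. The matching upper bound $\log M_{n,K}\lesssim \Sigma_n$ follows from Totik--Widom machinery, and $W_n\le M_{n,K}$ with $W_n\ge 1$ controls the easy side; but I do not know a direct potential-theoretic argument forcing $W_n$ upward when many gaps carry substantial Green mass, precisely because $L^2(\mu_K)$ does not see concentration. A plausible route is a direct comparison of the two monic extremal polynomials $T_{n,K}$ and $P_n(\cdot;\mu_K)$, which share the asymptotic zero distribution $\mu_K$: a two-sided estimate relating $\|P_n(\cdot;\mu_K)\|_{L^2(\mu_K)}$ to $\|T_{n,K}\|_{L^\infty(K)}$ on the $T$-set approximants, transported to the limit with uniform control in $m$, would close the argument. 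Securing this uniformity, that is, ruling out $L^2$ cancellation with no $L^\infty$ counterpart, is exactly the step where a new idea appears to be required, and where I expect the difficulty to concentrate.
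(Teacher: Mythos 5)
You are being asked to prove a statement that the paper itself presents as a \emph{conjecture}: the paper contains no proof of it, and to the best of current knowledge it is open. Measured against that, your proposal is honest and partially correct, but it is not a proof. The unconditional half is right: the chain $\|P_n(\cdot;\mu_K)\|_{L^2(\mu_K)}\le \|T_{n,K}\|_{L^2(\mu_K)}\le \|T_{n,K}\|_{L^\infty(K)}$ is \eqref{eq4} combined with $\mathrm{supp}(\mu_K)\subseteq K$ and $\mathrm{Cap}(\mathrm{supp}(\mu_K))=\mathrm{Cap}(K)$, and dividing by $\mathrm{Cap}(K)^n$ gives $W_n(\mu_K)\le M_{n,K}$; this is exactly the observation the paper makes at the end of the proof of Corollary \ref{cor}. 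Your reduction remark is also correct as far as it goes: for a Parreau--Widom set both sequences are bounded, $(W_n(\mu_K))$ by Christiansen's Szeg\H{o} theory \cite{christiansen} and $(M_{n,K})$ by the Totik--Widom bound $M_{n,K}\le 2\exp(\mathrm{PW}(K))$ of \cite{csz}, so the equivalence is only at stake when $\mathrm{PW}(K)=\infty$ (or $K$ is irregular for the Dirichlet problem, a case your reduction silently skips).

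The genuine gap is the implication you yourself flag: boundedness of $(W_n(\mu_K))$ forcing boundedness of $(M_{n,K})$, equivalently that $M_{n,K}\to\infty$ along a subsequence drives $W_n(\mu_K)$ upward. Your program for it has three unfilled holes. First, the governing quantity $\Sigma_n$ (``Green mass of the gaps resolved at the scale matching $n$'') is never defined, and for an arbitrary non-polar compact set there is no canonical degree-to-scale correspondence. Second, the claimed upper bound $\log M_{n,K}\lesssim\Sigma_n$ does not follow from existing Totik--Widom machinery: that machinery yields $M_{n,K}\le 2\exp(\mathrm{PW}(K))$ only when $\mathrm{PW}(K)<\infty$ and provides no degree-by-degree partial-sum refinement in the divergent regime. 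Third, the lower bound $\log W_n(\mu_K)\gtrsim\Sigma_n$ is precisely the open content, and the paper's own tools cannot supply it: Theorem \ref{thm2}(b) gives only the uniform constant $1$ on each $T$-set approximant, and the outer approximation $F_s\supset K$ of Theorem \ref{thm3} transports exactly that constant and nothing gap-sensitive, since at every fixed stage $s$ the inequality used is $W_n(\mu_{F_s})\ge 1$, independent of how much Green mass the gaps of $K$ carry. So your submission establishes one implication (already implicit in the paper) together with a plausible research outline for the other; it should be labeled as such rather than as a proof of the equivalence.
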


In Section 2, we present some aspects of Widom's theory and give proofs for the theorems. 
\section{Proofs}
Let $K=\cup_{j=1}^p[\alpha_j,\beta_j]$ be a disjoint union of several intervals, $E_j:= [\alpha_j,\beta_j]$ for each $j\in\{1,\ldots,p\}$ and $\{c_j\}_{j=1}^{p-1}$ (for $p=1$ there are no critical points) be the set of critical points of $g_K$. Then (see e.g. p. 186 in \cite{peherstor}), we have 
\begin{equation}\label{eqm}
\mu_K^\prime(t)=\frac{1}{\pi}\frac{|q(t)|}{\sqrt{\prod_{j=1}^p\, |(t-\alpha_j)(t-\beta_j)|}},\,\,\,\mbox{    $t\in K$}
\end{equation}
where $q(t)=1$ if $p=1$ and $q(t)=\prod_{j=1}^{p-1}(t-c_j)$ if $p>1$.

Let ${\partial g_K}/{\partial n_+}$ and ${\partial g_K}/{\partial n_-}$ denote the normal derivatives of $g_K$ in the positive and negative direction respectively. These functions are well defined on $K$ except the end points of the intervals. Moreover by symmetry of $K$ with respect to $\mathbb{R}$, we have ${\partial g_K}/{\partial n_+}={\partial g_K}/{\partial n_-}$, see p. 121 in \cite{saff}. Let ${\partial g_K}/{\partial n}:={\partial g_K}/{\partial n_+}$. Then, $({\partial g_K}/{\partial n})(t)= \pi\,\mu_K^\prime(t)$, see (5.6.7) in \cite{Sim3}. This is why we can state the functions and theorems in \cite{widom2} in terms of $\mu_K$ instead of ${\partial g_K}/{\partial n}$. Similarly, instead of harmonic measure at infinity we use the equilibrium measure, since these two measures are the same, see Theorem 4.3.14 in \cite{Ransford}. The concepts that we describe below can be found in \cite{apt,widom2} but with somewhat a different terminology.

Let $\mu\in\mathrm{Sz}(K)$ and $h$ be the harmonic function in $\overline{\mathbb{C}}\setminus K$ having boundary values (nontangential limit exists a.e.) $\log{\mu^{\prime}(t)}$. Then following Section 5 and Section 14 of \cite{widom2}, we define the multivalued analytic function $R$ in $\overline{\mathbb{C}}\setminus K$ by $R(z)=\exp{h(z)+ i \tilde{h}(z)}$ where $\tilde{h}$ is a harmonic conjugate of $h$ and
\begin{equation*}
R(\infty)= \exp{\left(\int \log{\mu^\prime(t)} d\mu_K(t)\right)}.
\end{equation*} 
Now, $R$ has no zeros or poles. Moreover, $|\log{R(z)}|$ is single-valued on $\overline{\mathbb{C}}\setminus K$ and has boundary values  $\log{\mu^\prime(t)}$ on $K$.

Let $F$ be a multivalued meromorphic function having finitely many zeros and poles in $\overline{\mathbb{C}}\setminus K$
for which $|F(z)|$ is single-valued. Then, $$\displaystyle\gamma_j(F):=(1/{2\pi})\underset{E_j}{\triangle}\arg{F},$$  for each $j\in\{1,\ldots,p\}$. Here, $\underset{E_j}{\triangle}\arg{F}$ denotes the increment of the argument of $F$ in going around a positively oriented curve $F_j$ enclosing $E_j$. The curve is taken so close to $E_j$ that it does not intersect with or enclose any points of $E_k$ with $k\neq j$. A multiple-valued function $U$ in $\overline{\mathbb{C}}\setminus K$ with a single-valued absolute value is of class $\Gamma_\gamma$ if $\gamma=(\gamma_1,\ldots,\gamma_p)\in [0,1)^p$ and $\gamma_j(U)= \gamma_j \mod{1}$ for each $j\in\{1,\ldots,p\}$. 

Let $H^2(\overline{\mathbb{C}}\setminus K, \mu^\prime, \Gamma_\gamma)$ denote the space of multi-valued analytic functions $F$ from $\Gamma_\gamma$ in $\overline{\mathbb{C}}\setminus K$ such that $|F(z)^2 R(z)|$ has a harmonic majorant. Then 
\begin{equation*}
\nu(\mu^\prime, \Gamma_\gamma):=\inf_F \int_E |F(t)|^2 \mu^\prime(t) dt.
\end{equation*}
where $F\in H^2(\overline{\mathbb{C}}\setminus K, \mu^\prime, \Gamma_\gamma)$ and $|F(\infty)|=1$.

For the point $(-n\mu_E(E_1) \mod{1}, \ldots, -n\mu_E(E_p)\mod{1})$ with $n\in\mathbb{N}$ we use $\Gamma_n$.

Before giving the proofs, we state some results from \cite{widom2} in a unified way. The part $(a)$ is Theorem 12.3, the part $(c)$ is Theorem 9.2 (see p. 223 for the explanation of why it is applicable) and the part $(b)$ is given in p. 216 in \cite{widom2}.

\begin{theorem}\label{thm4}
	Let $K=\cup_{j=1}^p [\alpha_j,\beta_j]$ be a disjoint union intervals and let $\mu\in\mathrm{Sz}(K)$. Then
	\begin{enumerate}[(a)]
		\item $\left(W_n\left(\mu\right)\right)^2\sim\ \nu(\mu^\prime, \Gamma_n)$ where $a_n\sim\ b_n$ means that $\frac{a_n}{b_n}\rightarrow 1$ as $n\rightarrow\infty$.
		\item $\left(W_n\left(\mu\right)\right)^2\geq \frac{\nu(\mu^\prime, \Gamma_n)}{2}$ for all $n\in\mathbb{N}$.
		\item The limit points of $\left(\left(W_n\left(\mu\right)\right)^2\right)_{n=1}^\infty$ are bounded below by $$2\pi R(\infty)\mathrm{Cap}(K)\exp(-\mathrm{PW}(K)).$$
	\end{enumerate}
\end{theorem}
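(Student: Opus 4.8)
The plan is to treat Theorem \ref{thm4} as a transcription of three results of Widom \cite{widom2} into the equilibrium-measure language set up above, so that the real content lies not in a fresh argument but in checking that every object in \cite{widom2} has been correctly identified with its counterpart here. First I would assemble the dictionary: Widom phrases his extremal problems in terms of the normal derivative $\partial g_K/\partial n$ and of harmonic measure at infinity, whereas the identities $(\partial g_K/\partial n)(t)=\pi\,\mu_K^\prime(t)$ (from (5.6.7) in \cite{Sim3}) and the coincidence of harmonic measure at infinity with $\mu_K$ (Theorem 4.3.14 in \cite{Ransford}) let one replace both by $\mu_K$. With these substitutions the weight in Widom's $L^2$ problem becomes $\mu^\prime$, the multivalued function $R$ with $R(\infty)=\exp\!\left(\int\log\mu^\prime\,d\mu_K\right)$ is the Szeg\H{o}-type function attached to $\mu^\prime$, and the classes $\Gamma_\gamma$ together with the extremal quantity $\nu(\mu^\prime,\Gamma_\gamma)$ are exactly Widom's. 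One must also confirm that $\mu\in\mathrm{Sz}(K)$ is precisely the hypothesis under which $h$, $\tilde h$, $R$, and hence $\nu$, are well defined and finite; this is where the Szeg\H{o}/Parreau-Widom assumptions enter.

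The second step is to pin down why the character attached to degree $n$ is $\Gamma_n$, that is, the class of $(-n\mu_K(E_1),\ldots,-n\mu_K(E_p))\bmod 1$. A monic polynomial of degree $n$ behaves like $z^n$ at infinity, and since $g_K(z)=\log|z|-\log\mathrm{Cap}(K)+o(1)$ as $z\to\infty$, dividing by $\mathrm{Cap}(K)^n$ and passing to the multivalued analytic completion forces the increment of the argument around $E_j$ to be $-2\pi n\,\mu_K(E_j)$, since $\gamma_j$ of the exponentiated Green function records the harmonic measure $\mu_K(E_j)$. This identifies the minimal-norm problem for $P_n(\cdot;\mu)$ with the extremal problem defining $\nu(\mu^\prime,\Gamma_n)$. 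Part (a) is then Widom's Theorem 12.3 read through the dictionary, part (b) is the non-asymptotic inequality recorded on p.~216 of \cite{widom2}, and part (c) is Theorem 9.2 there (its applicability justified on p.~223), the factor $\exp(-\mathrm{PW}(K))$ arising from the Blaschke-type product over the critical points $\{c_j\}$ of $g_K$.

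I expect the main obstacle to be the bookkeeping of normalization constants rather than any deep step: one has to confirm that the universal factor $2$ in (b), and the prefactor $2\pi R(\infty)\mathrm{Cap}(K)$ in (c), emerge exactly as stated once Widom's normalizations for the Green function, the harmonic measure, and the $H^2$ norm are aligned with the conventions fixed in \eqref{eqm}--\eqref{alar}. In particular I would double-check that $\nu(\mu^\prime,\Gamma_\gamma)$ as defined here, with the side condition $|F(\infty)|=1$ and integration against $\mu^\prime(t)\,dt$, matches the quantity whose limit points Widom bounds, and that no stray factor of $\pi$ coming from $\partial g_K/\partial n=\pi\mu_K^\prime$ has been absorbed incorrectly. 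Once these constants are reconciled, the three parts follow verbatim from the cited theorems.
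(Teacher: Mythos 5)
Your proposal matches the paper's treatment: the paper gives no independent proof of Theorem \ref{thm4}, but simply attributes part (a) to Theorem 12.3, part (b) to p.~216, and part (c) to Theorem 9.2 (with applicability discussed on p.~223) of Widom \cite{widom2}, exactly the three sources you cite, after setting up the same dictionary via $(\partial g_K/\partial n)(t)=\pi\mu_K^\prime(t)$ and the identification of harmonic measure at infinity with $\mu_K$. Your additional remarks on why the character class is $\Gamma_n$ and on reconciling normalization constants are a reasonable elaboration of the translation the paper leaves implicit, but the approach is the same.
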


\begin{proof}[Proof of Theorem \ref{thm2}]
	Let $\{\alpha_j\}_{j}$ and $\{\beta_j\}_j$ be the set of left and right endpoints of the connected components of $K$ respectively so that $\alpha_1<\beta_1<\dots<\alpha_p<\beta_p$. Moreover let $E_j:=[\alpha_j,\beta_j]$ for each $j\in\{1,\ldots,p\}$ and $\{c_j\}_j$ be the set of critical points of $g_K$. 
	\begin{enumerate}[(a)]
		\item First, let us show that $\liminf_{n\rightarrow\infty} \left(W_n\left(\mu_K\right)\right)^2\geq 2$. Since $\mu_K\in\mathrm{Sz}(K)$, Theorem \ref{thm4} is applicable. We need to compute 
		\begin{equation*}
		\log{R(\infty)}= \int \log{\mu_K^\prime(t)} \,d\mu_K(t).
		\end{equation*}
		Using \eqref{eqm}, we can write 
		\begin{equation*}
		\log{R(\infty)}= -\log{\pi}+D_1+D_2+D_3
		\end{equation*}
		where 
		$$D_1=-\frac{1}{2} \sum_{j=1}^p\int \log|t-\alpha_j|\,d\mu_K(t),$$
		$$D_2=-\frac{1}{2} \sum_{j=1}^p\int \log|t-\beta_j|\,d\mu_K(t),$$
		$$D_3=\sum_{j=1}^{p-1}\int \log|t-c_j|\,d\mu_K(t),\,\,\, \mathrm{if} \,\,\,p\geq 2$$
		and $D_3=0$ if $p=1$.
		
		Since $K$ is regular with respect to the Dirichlet problem, $g_K$ can be extended to $\overline{\mathbb{C}}$ by taking $g_K(z)=0$ for $z\in K$ so that $g_K$ is continuous everywhere in $\mathbb{C}$. Besides, 
		\begin{equation}\label{green}
		g_K(z)= -U^{\mu_K}(z) -\log{\mathrm{Cap}(K)}
		\end{equation}
		holds in $\mathbb{C}$ where $U^{\mu_K}(z)=-\int \log{|z-t|}\,d\mu_K(t)$. See p. 53-54 in \cite{saff}.

		By \eqref{green}, for any $z\in K$ we have $\int \log{|z-t|}\,d\mu_K(t)= \log{\mathrm{Cap}(K)}$. Hence, $D_1+D_2= 2p(-1/2)\log{\mathrm{Cap}(K)}=-\log({\mathrm{Cap}(K)^p})$.
		
		For $p\geq 2$, $\int \log|t-c_j|\,d\mu_K(t)=g(c_j)+\log{\mathrm{Cap}(K)}$ by \eqref{green}. Thus, 
		\begin{equation}\label{d3}
		D_3= \mathrm{PW}(K)+\log{\left(\mathrm{Cap}(K)^{p-1}\right)}.
		\end{equation}
		But since $\mathrm{PW}(K)+\log({\mathrm{Cap}(K)^{p-1}})=0$ for $p=1$, \eqref{d3} is valid for $p\geq 1$.
		Therefore,
		\begin{equation*}
		\log{R(\infty)}= -\log{\pi}+\mathrm{PW}(K)-\log{\mathrm{Cap}(K)}.
		\end{equation*}
		Using the part $(c)$ of Theorem \ref{thm4}, we have
		\begin{equation*}
		\liminf_{n\rightarrow\infty} \left(W_n\left(\mu_K\right)\right)^2\geq \frac{2\pi\exp({\mathrm{PW}(K)})\mathrm{Cap}(K)}{\pi\exp({\mathrm{PW}(K)})\mathrm{Cap}(K)}\geq 2.
		\end{equation*}
		In order to complete the proof, it is enough to show that 
		\begin{equation}\label{aaa}
		\liminf_{n\rightarrow\infty} \left(W_n\left(\mu_K\right)\right)^2\leq 2.
		\end{equation}
		
		On $[-1,1]$, we have the formula $p_l(x;\mu_{[-1,1]})= \sqrt{2}S_l(x)$ where $S_l$ is the $l$-th Chebyshev polynomial on $[-1,1]$ of the first kind, see (1.89b) in \cite{rivlin}. By Theorem 1 and Theorem 11 in \cite{van assche} this gives,
		\begin{equation*}
		p_{lN}\left(x;\mu_K\right)= p_l\left(T_N(x);\mu_{[-1,1]}\right)= \sqrt{2} S_l(T_N(x)),
		\end{equation*}
		for each $l\in\mathbb{N}$. The leading coefficient of $p_{lN}\left(x;\mu_K\right)$ is $\sqrt{2}\cdot{2^{l-1}}\cdot c^l$ or in other words $\|P_{lN}(\cdot;\mu_K)\|_{L^2\left(\mu_K\right)}=(\sqrt{2}\cdot{2^{l-1}}\cdot c^l)^{-1}$. By (5.2) in \cite{van assche}, $\mathrm{Cap}(K)^{lN} = (2c)^{-l}$ since (see e.g. p. 135 in \cite{Ransford}) $\mathrm{Cap}[-1,1]=1/2$. Therefore, $W_{lN}(\mu_K)=\sqrt{2}$ for each $l\in\mathbb{N}$ and \eqref{aaa} holds. This completes the proof of the part $(a)$.
		\item By Theorem \ref{thm1}, $(lN+s)\mu_K(E_j) =s\cdot \mu_K(E_j)\mod{1}$ for all $l\in\mathbb{N}$, $s\in\{0,\ldots, N-1\}$ and $j\in\{1,\ldots, N\}$. Hence $\Gamma_{lN+s}= \Gamma_s$ where $l$ and $s$ are as above. Therefore, $\left(\nu\left(\mu_K^\prime,\Gamma_n\right)\right)_{n=1}^\infty$ is a periodic sequence of period $N$. This implies that $\displaystyle\inf_{n\in\mathbb{N}}\nu\left(\mu_K^\prime,\Gamma_n\right)= \liminf_{n\rightarrow\infty}\nu\left(\mu_K^\prime,\Gamma_n\right)$. By the part $(a)$ of Theorem \ref{thm4} and the part $(a)$ of this theorem, we have
		\begin{equation} \label{rrr}
		\liminf_{n\rightarrow\infty}\nu\left(\mu_K^\prime,\Gamma_n\right)=\liminf_{n\rightarrow\infty} \left(W_n\left(\mu_K\right)\right)^2=2.
		\end{equation}
		From \eqref{rrr}, it follows that, $\displaystyle\inf_{n\in\mathbb{N}}\nu\left(\mu_K^\prime,\Gamma_n\right)=2.$ By the part $(b)$ of Theorem \ref{thm4}, we get $\left(W_n\left(\mu_K\right)\right)^2\geq 1$ for each $n\in\mathbb{N}$ which gives the desired result.
		\item Equality on the right can be found in the literature, see e.g. (2.23) in \cite{damanik}. 
		As we see, in the proof of part $(b)$, $\left(W_n\left(\mu_K\right)\right)_{n=1}^\infty$ is asymptotically periodic with the periodic limit $\left(\sqrt{\nu\left(\mu_K^\prime,\Gamma_n\right)}\right)_{n=1}^\infty$. The periodic limit can be written in the form 
		\begin{equation*}
		\left(d\frac{a_1^\prime \cdots a_n^\prime}{\mathrm{Cap}(K)^n}\right)_{n=1}^\infty,
		\end{equation*}
		by Corollary 6.7 of \cite{Chris} where $d\in\mathbb{R}^+$. Since $W_{lN}(\mu_K)=\sqrt{2}$ by the proof of part $(a)$ and $\frac{a_1^\prime \cdots a_{lN}^\prime}{\mathrm{Cap}(K)^{lN}}=1$ holds for all $l\in\mathbb{N}$, we obtain $d=\sqrt{2}$. Besides,
		\begin{equation}\label{rtr}
		\liminf_{l\rightarrow\infty} \sqrt{2}\frac{a_1^\prime \cdots a_l^\prime}{\mathrm{Cap}(K)^l}= \liminf_{l\rightarrow\infty} W_{l}(\mu_K) =\sqrt{2}
		\end{equation}
		holds by the part $(a)$.
		Using periodicity and \eqref{rtr}, we have 
		\begin{equation*}
		\inf_{l\in\mathbb{N}} \frac{a_1^\prime \cdots a_{l}^\prime}{\mathrm{Cap}(K)^{l}}= 	\liminf_{l\rightarrow\infty} \frac{a_1^\prime \cdots a_l^\prime}{\mathrm{Cap}(K)^l}=1.
		\end{equation*}
		This concludes the proof. 
	\end{enumerate}
\end{proof}
\begin{proof}[Proof of Theorem \ref{thm3}]
	By Theorem 5.8.4 in \cite{Sim3}, there is a sequence $(F_s)_{s=1}^\infty$ of $T$-sets such that 
	\begin{equation}\label{ere}
	K\subset \dots\subset F_{s+1}\subset F_{s}\subset \dots \subset \mathbb{R}
	\end{equation}
	and 
	\begin{equation}\label{rer}
	\cap_{s=1}^\infty F_s= K
	\end{equation}
	hold. Moreover, \eqref{ere} and \eqref{rer} imply that
	\begin{equation}\label{weak}
	\mu_{F_s}\rightarrow \mu_K
	\end{equation}
	in weak star sense, and
	\begin{equation*}
	\mathrm{Cap}(F_s)\rightarrow \mathrm{Cap}(K)
	\end{equation*}
	as $s\rightarrow\infty$. 
	
	Let $n\in\mathbb{N}$. Then for each $s\in\mathbb{N},$ we have
	\begin{equation}\label{tete}
	\|P_{n}(\cdot;\mu_{F_s})\|_{L^2\left(\mu_{F_s}\right)}\leq \|P_{n}(\cdot;\mu_{K})\|_{L^2\left(\mu_{F_s}\right)}
	\end{equation}
	by minimality of $P_{n}(x;\mu_{F_s})$ in $L^2\left(\mu_{F_s}\right)$. It follows from monotonicity (see e.g. Theorem 5.1.2 \cite{Ransford}) of capacity that 
	\begin{equation}\label{cap}
	\mathrm{Cap}(K)\leq \mathrm{Cap}(F_s)  \mbox{  for each  } s\in\mathbb{N}.
	\end{equation}
	Hence, 
	\begin{align}
	\left(W_{n}\left(\mu_K\right)\right)^2 &= \frac{\int P_{n}^2(t;\mu_K)\,d\mu_K(t)}{\mathrm{Cap}(K)^{2n}}\\
	\label{aa1}&=\frac{\lim_{s\rightarrow\infty}\int P_{n}^2(t;\mu_K)\,d\mu_{F_s}(t)}{\mathrm{Cap}(K)^{2n}}\\
	\label{aa2}&\geq \liminf_{s\rightarrow\infty} \frac{\int P_{n}^2(t;\mu_{F_s})\,d\mu_{F_s}(t)}{\mathrm{Cap}(F_s)^{2n}}\\
	&=\liminf_{s\rightarrow\infty}\left(W_{n}\left(\mu_{F_s}\right)\right)^2\\
	\label{aa3}&\geq 1.
	\end{align}
	In order to obtain \eqref{aa1}, we use \eqref{weak}. The inequality \eqref{aa2} follows from \eqref{tete} and \eqref{cap}, and \eqref{aa3} is obtained by using the part $(b)$ of Theorem \ref{thm2}. Thus, the proof is complete. 
\end{proof}
\begin{proof}[Proof of Corollary \ref{cor}]
	Let $\left(a_{n_j}\right)_{j=1}^\infty$ be a subsequence of $(a_n)_{n=1}^\infty$ such that $a_{n_j}\rightarrow 0$ as $j\rightarrow\infty$. By \eqref{alar} and Theorem \ref{thm3}, for each $j>1$, we have
	\begin{equation}\label{eee}
	W_{n_j-1}(\mu_K)= W_{n_j}(\mu_K)\frac{\mathrm{Cap}(K)}{a_{n_j}}\geq \frac{\mathrm{Cap}(K)}{a_{n_j}}
	\end{equation}
	Since $a_{n_j}\rightarrow 0$ as $j\rightarrow\infty$, the right hand side of \eqref{eee} goes to infinity as $j\rightarrow\infty$. Hence $\lim_{j\rightarrow\infty}W_{n_j-1}(\mu_K)=\infty$ and in particular $\left(W_n\left(\mu_K\right)\right)_{n=1}^\infty$ is unbounded. Since $\mathrm{supp}(\mu_K)\subset K$,  $\|T_{n,\mathrm{supp}(\mu_K)}\|_{L^\infty(\mathrm{supp}(\mu_K))}\leq \|T_{n,K}\|_{L^\infty(K)}$ holds for all $n\in\mathbb{N}$. Thus, by \eqref{eq4}, $W_n\left(\mu_K\right)\leq M_{n,K}$ for each $n\in\mathbb{N}$. This implies that $\left(M_{n,K}\right)_{n=1}^\infty$ is also unbounded. 
\end{proof}



\begin{thebibliography}{155}
	%
	%
	
	\bibitem{alpgon}Alpan, G., Goncharov, A.: {Orthogonal polynomials for the weakly equilibrium Cantor sets}, accepted for publication in Proc. Amer. Math. Soc. 
	
	\bibitem{alpgon2}Alpan, G., Goncharov, A.: {Orthogonal polynomials on generalized Julia sets}, Preprint (2015), arXiv:1503.07098v3
	
	\bibitem{alp3}Alpan, G., Goncharov, A., \c{S}\.{i}m\c{s}ek, A.N.: {Asymptotic properties of Jacobi matrices for a family of fractal measures}, Preprint (2016), arXiv:1603.02312v1
	
	\bibitem{apt}Aptekarev, A.I.: {Asymptotic properties of polynomials orthogonal on a system of contours, and periodic motions of Toda lattices}, Mat. Sb., \textbf{125},  231–258 (1984), English translations in Math. USSR Sb., \textbf{53} , 233–-260 (1986)
	
	\bibitem{Barnsley3}Barnsley M.F., Geronimo, J.S., Harrington, A.N.: {Infinite-Dimensional Jacobi Matrices Associated with Julia Sets}, Proc. Amer. Math. Soc., \textbf{88}(4), 625--630 (1983)
	
	\bibitem{Barnsley4}Barnsley, M.F., Geronimo, J.S., Harrington, A.N.: {Almost periodic Jacobi matrices associated with Julia sets for polynomials}, Comm. Math. Phys., \textbf{99}(3), 303--317 (1985)
	
	\bibitem{christiansen}Christiansen, J.S.: {Szeg\H{o}'s theorem on Parreau-Widom sets}, Adv. Math., \textbf{229}, 1180--1204 (2012)
	
	
	
	\bibitem{Chris}Christiansen, J.S., Simon, B., Zinchenko, M.: {Finite Gap Jacobi Matrices, II. The Szeg\"o Class}, Constr. Approx., \textbf{33}, 365--403 (2011)
	
	\bibitem{csz}Christiansen, J.S., Simon, B., Zinchenko, M.: {Asymptotics of Chebyshev Polynomials, I. Subsets of $\mathbb{R}$}, Preprint (2015), arXiv:1505.02604v1
	
	\bibitem{damanik}Damanik, D, Killip, R., Simon, B.: {Perturbations of orthogonal polynomials with periodic recursion coefficients}, Ann. Math., \textbf{171}, 1931--2010 (2010)
	
	
	\bibitem{van assche}Geronimo, J.S., Van Assche, W.: {Orthogonal polynomials on several intervals via a polynomial mapping}, Trans. Amer. Math. Soc., \textbf{308}, 559--581 (1988)
	
	\bibitem{gonchat} Goncharov A., Hatino\u{g}lu, B.: {Widom Factors}, Potential Anal., \textbf{42}, 671--680 (2015)
	
	\bibitem{peherr}Peherstorfer, F.: {Orthogonal and extremal polynomials on several intervals}, J. Comput. Appl. Math., \textbf{48}, 187--205 (1993) 
	
	\bibitem{peherstor}Peherstorfer, F.: {Deformation of Minimal Polynomials and Approximation of Several Intervals by an Inverse Polynomial Mapping}, J. Approx. Theory, \textbf{111}, 180--195 (2001)
	
	\bibitem{peher} Peherstorfer, F., Yuditskii, P.: {Asymptotic behavior of polynomials orthonormal on a homogeneous set}, J. Anal. Math., \textbf{89}, 113-154 (2003)
	
	\bibitem{Ransford}Ransford, T.: {Potential theory in the complex plane}, Cambridge University Press, (1995)
	
	\bibitem{rivlin} Rivlin, T.J.: {Chebyshev polynomials : from approximation theory to algebra and number theory}, Second Edition, J. Wiley and Sons, New York, (1990)
	
	\bibitem{saff}Saff, E.B., Totik, V.: {Logarithmic potentials with external fields}, Springer-Verlag, New York (1997)
	
	\bibitem{sif} Schiefermayr, K.: {A lower bound for the minimum deviation of the Chebyshev polynomial on a compact real set}, East J. Approx., \textbf{14}, 223--233 (2008)
	
	\bibitem{simon1}Simon, B.: {Equilibrium measures and capacities in spectral theory}, Inverse Probl. Imaging, \textbf{1}, 713--772 (2007)
	
	\bibitem{Sim3}Simon, B.: {Szeg\H{o}'s Theorem and Its Descendants: Spectral Theory for $L^2$ Perturbations of Orthogonal Polynomials}, Princeton University Press, Princeton, NY (2011)
	
	\bibitem{sodin}Sodin, M., Yuditskii, P.: {Functions deviating least from zero on closed subsets of the real axis}. St. Petersbg. Math. J. \textbf{4}, 201--249 (1993)
	
	\bibitem{Stahl}Stahl, H., Totik, V.: {General orthogonal polynomials},
	Encyclopedia of Mathematics, vol. 43, Cambridge University Press, New York (1992)
	
	\bibitem{tottot}Totik, V.:{Asymptotics for Christoffel functions for general measures on the real line}, J. Anal. Math., \textbf{81}, 283--303 (2000)
	
	\bibitem{totikacta}Totik, V.: {Polynomials inverse images and polynomial inequalities}, Acta Math., \textbf{187}, 139--160 (2001)
	
	\bibitem{tot11} Totik, V.: {Chebyshev constants and the inheritance problem}, J. Approx. Theory, \textbf{160}, 187--201 (2009)
	
	\bibitem{totikinv} Totik, V.: {The polynomial inverse image method}. In \textit{Springer Proceedings in Mathematics},  Approximation Theory XIII: San Antonio, \textbf{13}, M. Neamtu and L. Schumaker (eds.), 345--367 (2010)
	
	\bibitem{tot2} Totik, V.: {Chebyshev Polynomials on Compact Sets}, Potential Anal., \textbf{40}, 511--524 (2014)
	
	\bibitem{tot1} Totik, V., Yuditskii, P.: {On a conjecture of Widom}, J. Approx. Theory, \textbf{190}, 50--61 (2015)
	
	
	
	\bibitem{ase2}Van Assche, W.: {Invariant zero behaviour for orthgonal polynomials on compact sets of the real line}, Bull. Soc. Math. Belg. Ser. B \textbf{38}, 1--13 (1986)
	
	
	\bibitem{widom}Widom, H.: {Polynomials associated with measures in the complex plane}, J. Math. Mech, \textbf{16}, 997--1013 (1967)
	
	\bibitem{widom2}Widom, H: {Extremal polynomials associated with a system of curves in the complex plane}, Adv. Math., \textbf{3}, 127--232 (1969)
	
	\bibitem{yuditskii}Yudistkii, P: {On the Direct Cauchy Theorem in Widom Domains: Positive and Negative Examples}, Comput. Methods Funct. Theory, \textbf{11}, 395--414 (2012)
\end{thebibliography}


\end{document}